\documentclass{amsart}

\usepackage{amssymb,latexsym}     
\usepackage[utf8]{inputenc}
\usepackage[english]{babel}

\usepackage{mathrsfs}
\usepackage{amsmath,amsthm}
\usepackage{graphicx}
\usepackage{tikz-cd}
\usepackage{adjustbox}
\usepackage{tikz}
\usetikzlibrary{matrix}
\usepackage{caption}

\newtheorem{Lemma}{Lemma}[section]

\newtheorem{Theorem}[Lemma]{Theorem}
\newtheorem{Conjecture}[Lemma]{Conjecture}
\newtheorem{Corollary}[Lemma]{Corollary}
\newtheorem{Statement}[Lemma]{Statement}
\theoremstyle{definition}
\newtheorem{Remark}[Lemma]{Remark}
\newtheorem{Definition}[Lemma]{Definition}
\newtheorem{Notation}[Lemma]{Notation}

\usepackage{float}
\usepackage{subcaption}

\usepackage[lmargin=0.8in,rmargin=0.8in,bmargin=0.5in,tmargin=0.8in]{geometry}
\pagestyle{plain}

\begin{document}
\vspace*{-1cm}

\title{New explicit solution to the $N$-Queens Problem\\ and\\ its relation to the Millennium Problem}

\author{Dmitrii Mikhailovskii}
\address{School 564, Saint Petersburg, 190005 Russia} 
\email{mikhaylovskiy.dmitriy@gmail.com}

\begin{abstract}
Using modular arithmetic of the ring $\mathbb{Z}_{n+1}$ we obtain a new short solution to the problem of existence of at least one solution to the $N$-Queens problem on an $N \times N$ chessboard. It was proved, that these solutions can be represented as the Queen function with the width fewer or equal to $3$. It is shown, that this estimate could not be reduced. A necessary and sufficient condition of being a composition of solutions a solution is found. Based on the obtained results we formulate a conjecture about the width of the representation of arbitrary solution. If this conjecture is valid, it entails solvability of the $N$-Queens completion in polynomial time. The connection between the $N$-Queens completion and the Millennium $P$ vs $NP$ Problem is found by the group of mathematicians from Scotland in August $2017$.
\end{abstract}
\maketitle

\section*{Introduction}

The Millennium Problems are seven problems in mathematics that were stated by the Clay Mathematics Institute in $2000$. A correct solution to any of these problems results in a US $\$ 1000000$ prize being awarded by the institute to the discoverer. Currently, the only Problem that has been solved is so-called Poincare conjecture. Another of these $7$ Problems is related to the complexity of algorithms. Among these algorithms, polynomial algorithms are highlighted. The class of these algorithms is designated by $P$. Another class of algorithms are the algorithms which are able to check in polynomial number of steps that an answer is indeed a solution to a problem. Class of these algorithms is designated by $NP$. The Millennium Problem is the $P$ versus $NP$ problem. In August $2017$ a group of mathematicians from Scotland proved that $N$-Queens Completion Problem is $NP$-complete. Namely, if this problem can be solved in polynomial time, then $P$ is equal to $NP$.

Our work is devoted to the $N$-Queens Problem i.e. the well-known problem of placing $N$ chess queens on an $N\times N$ chessboard so that no two queens attack each other.

C.F. Gauss found $72$ solutions for $N=8$. But $24$ years later J.W.L. Glaisher proved using a method of determinants that for $N=8$ there are exactly $92$ solutions. The existence of a solution for arbitrary $N$ was proved by different authors using different methods. It was first proved by E. Pauls in $1874$. Now the number of different solutions $Q(N)$ is computed only for $4\leqslant N\leqslant 27$. Calculation of $Q(N)$ is related to the $N$-Queens Completion Problem (if we have $m<N$ queens on the board, is it possible to complete this board to the solution of the $N$-Queens Problem?). Existing methods of solving the completion problem stop working for $N\geqslant 1000$.

In this paper, we try to find such an algorithm.

We introduce the new way of representing the arrangements of queens as a {\bf Queen functions with width $k$} which is a map $[1,N] \to [1,N]$ which is defined on a partition of a segment $[1,N]$ by $k$ segments and on each of them it is linear on subsets of even and odd numbers. A map of positive integers $f: S \subseteq [1,N] \to [1,N]$ we call {\bf linear on $S$}, if there exist integers $a \in [1,N]$, $b \in [0,N]$, $f(i) = ai+b \pmod{N+1}$, where $i \in S$.

Using this construction we prove the next theorem:

\noindent {\bf The Width Theorem.} 
{\it For any $N>3$ there exists a solution which can be represented as a Queen function with width fewer or equal to $3$.}

The width of the function in the theorem cannot be reduced, since for $N=15$ our program checked that there are no solutions that can be represented as a Queen function with width $1$ or $2$. All other solutions by different authors have greater width than ours.

Next we introduce the concept of composition of solutions, formulate the criterion of being a composition of solutions a solution and prove this criterion for the generalization of composition. {\bf Generalized composition} of solutions $(A_1,A_2,\ldots, A_{|B|}) 
{\otimes} B$ is an arrangement which is defined by the following rule $$C(|B|(i-1) + j) = |B| (A_j(i)-1) + B(j),$$ where $|A_i|=|A_j| \ (1\leq i < j \leq |B|),$ $1\leq i \leq |A|$, $1 \leq j \leq |B|$.

\noindent {\bf Theorem.}
{\it Generalized composition $(A_1,A_2,\ldots, A_{|B|}) \otimes B$ of the solutions is a solution if and only if both of the following conditions hold:
\begin{enumerate}
\item $\{B(i) - i \pmod{|B|} \ | \ 1 \leqslant i \leqslant |B| \} = \mathbb{Z}_{|B|}.$
\item $\{B(i) + i \pmod{|B|} \ | \ 1 \leqslant i \leqslant |B| \} = \mathbb{Z}_{|B|}.$
\end{enumerate}}
Sufficiency in this theorem was proved by Polya in 1918 and we prove more complex part ~--- necessity.

If $A_1=A_2=\ldots=A_{|B|}$, we obtain {\bf The Composition Theorem}. 

Further we obtain two new corollaries from the Composition Theorem. 

And finally, we introduce definition of {\bf $Q$-irreducible} numbers $N$ for which none of the $N\times N$ solutions can be represented as a composition of smaller boards and definition of a {\bf fundamental set} of solutions which is a set of arrangements which generate all solutions to the $N$-Queens Problem by the rotation and reflection of the board. 

At the end, we formulate the conjecture that solves the $N$-Queens Problem and the $N$-Queens Completion for such $Q$-irreducible $N$ that $N-1$ is $Q$-irreducible in polynomial time:

\noindent{\bf Conjecture.}
{\it If numbers $N-1$ and $N$ are $Q$-irreducible, then there exists a set of fundamental solutions which can be represented as a Queen function with a width less or equal to $4$.}

Our program has checked this conjecture for $N$ up to $14$.

During the research new theoretical problems regarding prime numbers arise:
\begin{enumerate}
\item Are there infinitely many primes of form $2^k3^l-1$ (where $k,l \in \mathbb{N}$)? This is a generalization of Mersenne Primes Problem.
\item Are there infinitely many primes $p$ such that $2p+1$ is prime too?
\end{enumerate}

\noindent The paper is organized as follows. In the first section we introduce the new way of representation of queens' arrangements and prove the Width Theorem. In the second section we introduce composition of solutions, consider generalization of composition and prove the Composition Theorem. In the third section we consider connection between the width and composition and introduce a conjecture which solves the $N$-Queens Problem and $N$-Queens Completion in polynomial time.

\section{Width Theorem}

Any arrangement $A$ can be represented as a matrix or a permutation:

\begin{figure}[H]
\centering
\includegraphics[scale=0.125]{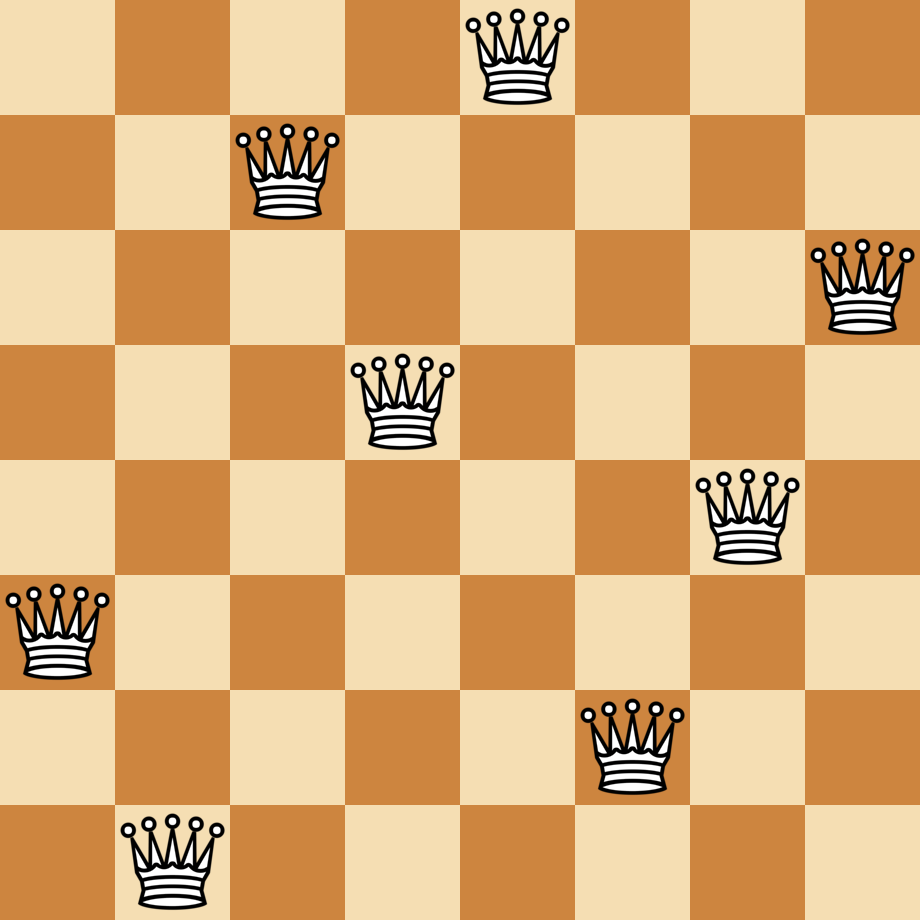}
     \caption{Arrangement $A$, $|A|=8$}
\end{figure}

$$A = \begin{pmatrix}
0 & 0 & 0 & 0 & 1 & 0 & 0 & 0\\
0 & 0 & 1 & 0 & 0 & 0 & 0 & 0\\
0 & 0 & 0 & 0 & 0 & 0 & 0 & 1\\
0 & 0 & 0 & 1 & 0 & 0 & 0 & 0\\
0 & 0 & 0 & 0 & 0 & 0 & 1 & 0\\
1 & 0 & 0 & 0 & 0 & 0 & 0 & 0\\
0 & 0 & 0 & 0 & 0 & 1 & 0 & 0\\
0 & 1 & 0 & 0 & 0 & 0 & 0 & 0\\
\end{pmatrix};$$

$$\begin{array}{|c|c|c|c|c|c|c|c|c|}
\hline
i & 1 & 2 & 3 & 4 & 5 & 6 & 7 & 8\\
\hline
A(i) & 3 & 1 & 7 & 5 & 8 & 2 & 4 & 6\\
\hline
\end{array}$$

\begin{Notation}
The size of $A$ is designated by $|A|$.
\end{Notation}

It is obvious that in any column and any line there must be exactly one queen. It means that the necessary condition of being an arrangement a solution is being a permutation. Now any ascending diagonal can be characterized by constant difference of queen's line and column. And any descending diagonal can be characterized by constant sum of queen's line and column.

That is why we can formulate the following well-known statement.

\begin{Statement}
Permutation $A$ is a solution to the $N$-Queens Problem if and only if $$\forall \ 1 \leqslant i < j \leqslant N \quad |i-j| \neq |A(i)-A(j)|.$$
\end{Statement}

Now we will introduce the new way of representing solutions.

\begin{Definition}
A map of positive integers $f: S \subseteq [1,N] \to [1,N]$ we call {\bf linear on $S$}, if there exist integers $a \in [1,N]$, $b \in [0,N]$, $f(i) = ai+b \pmod{N+1}$, where $i \in S$.
\end{Definition}

\begin{Definition}
{\bf A Queen function} is a map $[1,N] \to [1,N]$ which is defined on a partition of a segment $[1,N]$ by segments and on each of them it is linear on subsets of even and odd numbers.
\end{Definition}

\begin{Definition}
{\bf The width} of the Queen function is the quantity of segments of the partition.
\end{Definition}

It is easy to see that any $N\times N$ arrangement of queens can be represented as a Queen function with width $N$. So that, our aim is to find the upper bound of the estimate of width. 

\noindent {\bf The Width Theorem.} 
{\it For any $N>3$ there exists a solution which can be represented as a Queen function with width fewer or equal to $3$.}

\begin{proof}

To prove this theorem we will prove the following $5$ lemmas.

Nowadays existing solutions for $N=12k-4$ are difficult and their proofs are long and require considering lots of cases. The main lemma of the proof is following one that gives solution particularly for classic board $8\times 8$. This solution is the simplest of the known ones. 

\begin{Lemma}
Let $N=12k-4$. Then the following Queen function gives a solution
\begin{align*}
A(i) = 
\begin{cases}
2i \pmod{N+1}, & i \leq \frac{N}{2}\\
2i+2 \pmod{N+1}, & i > \frac{N}{2} \text{ and } i \text{ is odd}\\
2i-2 \pmod{N+1}, & i > \frac{N}{2} \text{ and } i \text{ is even}\\
\end{cases}.
\end{align*}
\end{Lemma}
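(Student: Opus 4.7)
The plan is to verify the two defining conditions of a solution, invoking the Statement: that $A$ is a permutation of $[1,N]$, and that $|A(i)-A(j)|\neq|i-j|$ for all distinct $i,j$. The organizing observation is that on each of the three pieces of the partition we may write $A(i)\equiv 2i+c_i\pmod{N+1}$ with $c_i\in\{0,+2,-2\}$, and that the arithmetic of $N=12k-4$ furnishes the facts $N+1=3(4k-1)$, $N-1\equiv 1\pmod{3}$, and $N+3\equiv 2\pmod{3}$.

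First I would verify the permutation property by computing the image of each piece. On $[1,N/2]$ no reduction modulo $N+1$ is needed and $A$ enumerates the even values $\{2,4,\ldots,N\}$. For odd $i>N/2$ a single reduction gives $A(i)=2i-N+1$, sweeping out $\{3,7,\ldots,N-1\}$; for even $i>N/2$ a single reduction gives $A(i)=2i-N-3$, sweeping out $\{1,5,\ldots,N-3\}$. These three arithmetic progressions are disjoint and together cover $[1,N]$.

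For the non-attack condition I would split into the six cases determined by the pieces containing $i$ and $j$. In each of the three same-piece cases, $A$ is affine with slope $2$, so $|A(i)-A(j)|=2|i-j|$ strictly exceeds $|i-j|$ whenever $i\neq j$. For the two mixed cases pairing $[1,N/2]$ with a piece of $(N/2,N]$, setting $d=|i-j|$ reduces an attack to $|2d-N+1|=d$ or $|2d-N-3|=d$; the candidate solutions $d=(N-1)/3$ and $d=(N+3)/3$ are excluded by the divisibility facts stated above, while the larger roots $d=N-1$ and $d=N+3$ are excluded by direct size bounds on $d$ (combined with the parity of the available endpoints).

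The last and most delicate case is the mixed case between the two pieces of $(N/2,N]$: here the shifts differ by $4$, yielding $|A(i)-A(j)|=|2(i-j)\pm 4|$, and an attack forces $|i-j|=4$. The resolution is a parity argument: one of $i,j$ is odd and the other even, so $|i-j|$ is odd and cannot equal $4$. This parity clash is the main obstacle of the proof, and is precisely what dictates the choice of shifts $\pm 2$ in the definition of $A$; once it is dispatched, the Statement completes the argument.
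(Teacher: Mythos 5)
Your proposal is correct and follows essentially the same route as the paper: reduce each piece modulo $N+1$ to an affine map of slope $2$ and run a case analysis over pairs of pieces, using the divisibility facts $3\nmid N-1$ and $3\nmid N+3$ together with size and parity constraints. Your version is in fact slightly more complete: computing the three disjoint images $\{2,4,\ldots,N\}$, $\{3,7,\ldots,N-1\}$, $\{1,5,\ldots,N-3\}$ gives the permutation property in one stroke, and you explicitly dispatch (by the parity of $i-j$) the mixed case of an odd and an even index both exceeding $N/2$, a case the paper's own case analysis passes over.
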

\begin{proof}
For any $1 \leqslant i \leqslant N$ the value of $A(i)$ is not $0$ since $A(i)=2i \pmod{N+1}$ for $i\leqslant \frac{N}{2} \neq 0$. For odd $i>\frac{N}{2}$ suppose opposite. The value $A(i)= 2i+2 \pmod{N+1}=2i+2-N-1=0$. Then $2i+2=N+1$, but $i>\frac{N}{2}$ hence $2i+2>N+2$. For even $i$ suppose opposite. Then $A(i)=2i-2 \pmod{N+1}=2i-N-3=0$. Then $2i=N+3=12k-1$ but $12k-1$ is odd.

Now we will show that it is a permutation. For $i,j$ which are set by the same formula $A(i)\neq A(j)$. Let $i\leqslant \frac{N}{2}$, $j>\frac{N}{2}$.
\begin{enumerate}
\item $j$ is odd. Then $2i=2j+2-(N+1)$. Then $2(j-i+1)=N+1$, but $N+1$ is odd.
\item $j$ is even. Then $2i=2j-2-(N+1)$ or $2(j-i-1)=N+1$. But $N+1$ is odd.
\end{enumerate}

Now we will show that $|i-j|\neq|A(i)-A(j)|$. Suppose the opposite.
\begin{enumerate}
\item If for $i<j$ $A(i)$ and $A(j)$ are set by the formula $A(l)=2l \pmod {N+1}$, then $j-i=2j-2i$ or $1=2$. If they are set by formula $A(l)=2l+2 \pmod {N+1}$, then $j-i=2j+2-(N+1)-2i-2+(N+1)=2(j-i)$ or $1=2$. If they are set by formula $A(l)=2l-2 \pmod {N+1}$, then $j-i=2j-2-(N+1)-2i+2+(N+1)=2(j-i)$.
\item If $i\leqslant \frac{N}{2}$, $j>\frac{N}{2}$ and $j$ is odd, then $j-i=\pm(2i-(2j+2-(N+1)))=\pm(2i-2j+N-1)$.
\begin{enumerate}
\item $j-i=2i-2j+N-1$. Then $3(j-i)=N-1=12k-5$, but $12k-5$ is not divisible by $3$.
\item $j-i=2j-2i-N+1$. Then $j-i=N-1$. It is possible only for $j=N$, $i=1$, but then $j$ is even, which is another case.
\end{enumerate}
\item If $i\leqslant \frac{N}{2}$, $j>\frac{N}{2}$, $j$ is even, then $j-i=\pm(2i-(2j-2-N-1)=\pm(2i-2j+N+3)$.
\begin{enumerate}
\item $j-i=2i-2j+N+3$. Then $3(j-i)=N+3=12k-1$, but $12k-1$ is not divisible by $3$.
\item $j-i=2j-2i-N-3$. Then $j-i=N+3$, which is impossible since $j\leqslant n$.
\end{enumerate}
\end{enumerate}
Thus, $|i-j| \neq |A(i)-A(j)|$ and $A$ is a solution and for $N=12k-4$ there exists a solution with width $2$.
\end{proof}

\begin{Lemma}
Let $N=6k$ or $N=6k+4$. Then the following Queen function gives a solution 
\begin{align*}
A(i) = 2i \pmod{N+1}.
\end{align*}
\end{Lemma}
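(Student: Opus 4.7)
The plan is to verify the three conditions that make $A$ a valid solution to the $N$-Queens Problem: its image lies in $[1,N]$, it is a permutation of $[1,N]$, and it fulfills the non-attacking condition $|i-j| \neq |A(i)-A(j)|$ from the Statement above. The arithmetic engine of the argument is that in both cases $N=6k$ and $N=6k+4$, the modulus $N+1$ is odd and coprime to $3$: indeed $N+1 \equiv 1 \pmod{3}$ when $N=6k$ and $N+1 \equiv 2 \pmod{3}$ when $N=6k+4$.

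First I would check that $A(i) \neq 0$: if $2i \equiv 0 \pmod{N+1}$, then since $\gcd(2,N+1)=1$ (as $N+1$ is odd) we would have $(N+1) \mid i$, which is impossible for $i \in [1,N]$. Second, the same coprimality shows that multiplication by $2$ is a bijection of $\mathbb{Z}_{N+1}$, so its restriction to $\{1,\dots,N\}$ is a bijection, i.e.\ $A$ is a permutation.

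For the non-attacking condition, I would argue by contradiction: suppose $|i-j|=|A(i)-A(j)|$ for some $1 \le i < j \le N$. Reducing modulo $N+1$ we get $A(i)-A(j) \equiv 2(i-j)$, so the assumption becomes
$$\pm (i-j) \equiv 2(i-j) \pmod{N+1}.$$
The sign ``$+$'' forces $(N+1) \mid (j-i)$, which is ruled out by $0 < j-i < N+1$. The sign ``$-$'' forces $3(i-j) \equiv 0 \pmod{N+1}$; since $\gcd(3,N+1)=1$ under our hypothesis, this yields $(N+1) \mid (i-j)$ and hence $i=j$, again a contradiction.

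The only substantive step - if it can be called an obstacle - is the verification of $\gcd(3,N+1)=1$ for the two specified residue classes. This is precisely why the hypothesis excludes $N = 6k+2$: in that remaining even case $N+1 \equiv 0 \pmod{3}$, and the argument breaks down exactly at the $3(i-j)\equiv 0$ step, consistent with the fact that the sparser $6k+2$ case really does require the more elaborate width-$2$ construction of the previous lemma (for $N = 12k-4$) rather than a single linear formula.
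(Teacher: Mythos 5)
Your proof is correct and rests on the same arithmetic as the paper's --- namely that $\gcd(N+1,6)=1$ when $N=6k$ or $N=6k+4$ --- but you package the non-attacking check as the single congruence $\pm(i-j)\equiv 2(i-j)\pmod{N+1}$, which cleanly absorbs the paper's explicit case analysis on whether $i,j$ lie above or below $N/2$. Both the permutation argument (invertibility of $2$ modulo the odd number $N+1$) and the diagonal argument (invertibility of $3$ modulo $N+1$) are sound, so there are no gaps.
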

\begin{proof}
For any $1 \leqslant i \leqslant N$ the value of $A(i)$ is not $0$ since $N+1$ is not divisible by $2$. So all function values will be in range from $1$ to $N$. 

It will be a permutation: let $i\leqslant \frac{N}{2}$, $j>\frac{N}{2}$, suppose that $A(i)=A(j)$. Then $2i=2j-(N+1)$ or $2(i-j)=N+1$, but $N+1$ is odd.

Now let's show that this permutation is a solution. Suppose the opposite. 
Let $i>j$. Then $i-j=|(2i \pmod{N+1})-(2j \pmod{N+1})|$.
\begin{enumerate}
\item $i,j \leqslant \frac{N}{2}$. Then $i-j=2(i-j)$ or $1=2$.
\item $i,j>\frac{N}{2}$. Then $i-j=2i-(N+1)-(2j-(N+1))=2(i-j)$ or $1=2$.
\item $j\leqslant \frac{N}{2}$, $i>\frac{N}{2}$. Then
\begin{enumerate}
\item $i-j=2i-(N+1)-2j$. Then $i-j=N+1$ which is impossible.
\item $i-j=2j-(2i-(N+1))=2j-2i+N+1$ or $3(i-j)=N+1$, but $N$ is not divisible by $3$.
\end{enumerate}
\end{enumerate}
Thus, $|i-j| \neq |A(i)-A(j)|$ and $A$ is a solution and for $N=6k$ and $N=6k+4$ there exists a solution with width $1$.
\end{proof}

\begin{Lemma}
Let $N=6k+1$ or $N=6k+5$. Then the following Queen function gives a solution
\begin{align*}
A(i)=
\begin{cases}
2i \pmod{N+1}, & i < \frac{N}{2}\\
2i+1 \pmod{N+1}, & i> \frac{N}{2}\\
\end{cases}.
\end{align*}
\end{Lemma}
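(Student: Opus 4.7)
The plan is to follow the three-step template used in Lemmas 1.4 and 1.5: first verify that $A$ sends $[1,N]$ into $[1,N]$, then show $A$ is a permutation, and finally check the queens condition $|i-j|\neq|A(i)-A(j)|$ for all $1\leq i<j\leq N$. The two arithmetic facts I will use throughout are that $N$ is odd (so $N+1$ is even and the value $i=N/2$ does not occur, meaning the two pieces really do partition $[1,N]$) and that $N\not\equiv 0\pmod 3$, which holds for both $N=6k+1$ and $N=6k+5$.

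Steps one and two are routine. On the piece $i<N/2$ the inequality $2\leq 2i\leq N-1$ gives $A(i)=2i\in[2,N-1]$, and these are exactly the even numbers in $[1,N]$. On the piece $i>N/2$ the integer $2i+1$ is odd while $N+1$ is even, so $A(i)\neq 0$; combined with $N+1<2i+1\leq 2N+1$ this forces $A(i)=2i+1-(N+1)\in[1,N]$, and these values are exactly the odd numbers in $[1,N]$. The two ranges are disjoint and cover $[1,N]$, so $A$ is a bijection.

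The heart of the argument is step three. I will split on whether $i,j$ lie on the same side of $N/2$. When they do, $A(j)-A(i)=2(j-i)$ and the equation $|i-j|=|A(i)-A(j)|$ reduces to $i=j$, just as in the analogous cases of the previous two lemmas. The interesting case is $i<N/2<j$, where $A(j)-A(i)=(2j+1-(N+1))-2i=2(j-i)-N$, so $j-i=|2(j-i)-N|$ gives either $j-i=N$ (impossible since $1\leq i<j\leq N$) or $3(j-i)=N$, which forces $3\mid N$ and contradicts $N\equiv 1,5\pmod 6$.

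The only potential obstacle is this last sub-case, and the whole point of shifting by $+1$ above $N/2$ is to arrange for the mixed-position equation to reduce to the divisibility condition $3\mid N$; the hypothesis $N\in\{6k+1,6k+5\}$ is used exactly here, while the rest of the proof is bookkeeping with parities modulo $N+1$. Accordingly, the resulting Queen function has width $2$.
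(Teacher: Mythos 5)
Your proof is correct and follows essentially the same route as the paper: verify the range, check bijectivity, and reduce the mixed case $i<N/2<j$ of the diagonal condition to either $j-i=N$ or $3(j-i)=N$, both excluded by the hypotheses. Your parity observation that the two pieces hit exactly the even and odd values of $[1,N]$ is a slightly cleaner way to get bijectivity than the paper's direct computation, but the substance is the same.
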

\begin{proof}
Obviously, it will be a permutation. Let $i<\frac{N}{2}$, $j>\frac{N}{2}$. Suppose that $A(i)=A(j)$ or $2i=2j+1-N-1$. Then $2i-2j=-N$ or $j-i=\frac{N}{2}$, contradiction ($\frac{N}{2}$ is not integer).

Now we will show that $|i-j|\neq|A(i)-A(j)|$. Suppose the opposite. 
\begin{enumerate}
\item $j<i<\frac{N}{2}$. Then $i-j=(2i \pmod{N+1})-(2j \pmod{N+1}) = 2i-2j$ or $1=2$.
\item $i>j>\frac{N}{2}$. Then $i-j=(2i+1 \pmod{N+1})-(2j+1 \pmod{N+1})=2i+1-(N+1)-(2j+1-(N+1))=2i-2j$ or $1=2$.
\item $j<\frac{N}{2}$, $i>\frac{N}{2}$. Then
\begin{enumerate}
\item $i-j=(2i+1 \pmod{N+1})-(2j \pmod{N+1})=2i+1-(N+1)-2j=2i-2j-N$. Then $i-j=N$, contradiction.
\item  $i-j=(2j \pmod{N+1})-(2i+1 \pmod{N+1})=2j-(2i+1-(N+1))=2j-2i+N$. Then $3(i-j)=N$, contradiction because $N$ is not divisible by $3$.
\end{enumerate}
\end{enumerate}
Thus, $|i-j| \neq |A(i)-A(j)|$ and $A$ is a solution and for $N=6k+1$ and $N=6k+5$ there exists a solution with width $2$.
\end{proof}

Note that $\{ 6k+2 \ | \ k \in \mathbb{N} \} = \{ 12k-4 \ | \ k \in \mathbb{N} \} \cup \{ 12k+2 \ | \ k \in \mathbb{N} \}.$ 

\begin{Lemma}
Let $N=12k+2$. Then the following Queen function gives a solution
\begin{align*}
A(i)=\begin{cases}
2i+4 \pmod {N+1}, & i<\frac{N}{2} \text{ and } i \text{ is odd or } i = N\\
2i \pmod {N+1}, & i<\frac{N}{2} \text{ and } i \text{ is even}\\
2i+2 \pmod {N+1}, & \frac{N}{2}\leq i < N.\\
\end{cases}
\end{align*}
\end{Lemma}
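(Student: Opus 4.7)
The plan is to imitate the three-step pattern established in Lemmas 1.1--1.3: first verify that $A$ takes values in $[1,N]$, then verify that $A$ is a permutation, then verify the diagonal condition $|i-j|\neq|A(i)-A(j)|$ by a case analysis. Write $N+1=12k+3$ throughout; the two arithmetic facts that will do all the real work are that $N+1$ is odd and that neither $N$ nor $N\pm 2$ is divisible by $3$ (one checks $N=12k+2\equiv 2\pmod 3$ and $N+1\equiv 0\pmod 3$). I would also record once, at the beginning, the explicit reductions modulo $N+1$: for odd $i<N/2$ one has $1\le 2i+4\le N$, so no reduction occurs; for even $i<N/2$ one has $1\le 2i\le N-2$; for $N/2\le i<N$ one subtracts $N+1$ once to get $A(i)=2i+2-(N+1)=2i-12k-1\in[1,N-2]$; and for $i=N$ one subtracts $2(N+1)$ to get $A(N)=2$.

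For the permutation step, I would argue that within each of the four regimes $A$ is injective since it reduces to $2i+\text{const}$ on each regime, and $N+1$ is odd so the map $i\mapsto 2i\pmod{N+1}$ is injective. For cross-regime injectivity, note that on the first half $A(i)$ is always even (values of the forms $2i$ or $2i+4$), on the middle segment $A(i)=2i-12k-1$ is always odd, and $A(N)=2$. Thus middle-segment values cannot collide with first-half or with $A(N)$, and the only remaining possibility is $A(N)=A(i)$ for some even $i<N/2$, which would force $2=2i$, giving $i=1$, but $1$ is odd. So $A$ is a permutation.

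For the diagonal step, I would enumerate ordered pairs $i<j$ according to which of the four regimes they fall in. The intra-regime pairs (both odd and in the first half, both even and in the first half, or both in the middle segment) all give $A(j)-A(i)=2(j-i)$, which cannot equal $\pm(j-i)$. The mixed-parity pair inside the first half gives $A(j)-A(i)=2(j-i)\pm 4$; since $i$ and $j$ then have opposite parity, $j-i$ is odd, while the claimed difference is even, so no conflict is possible. The remaining cross-regime pairs are the interesting ones, and each reduces to one of two equations of the forms $j-i=N+c$ or $3(j-i)=N+c$ for a small $c$. For instance, an odd $i<N/2$ paired with $j$ in the middle segment gives $j-i=\pm(2(j-i)-12k-5)$, i.e.\ either $j-i=12k+5$ (impossible since $j-i<N=12k+2$) or $3(j-i)=12k+5$ (impossible since $3\nmid 12k+5$). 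The analogous analyses for even $i<N/2$ with $j$ in the middle segment, for $i<N/2$ with $j=N$, and for $i$ in the middle segment with $j=N$ each produce one ``too large'' sub-case ruled out by the range of $j-i$ and one ``divisibility'' sub-case ruled out by $3\nmid N,N\pm 2,2N+1$.

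The main obstacle is simply bookkeeping: the definition of $A$ has three branches plus the isolated point $i=N$, giving a $4\times 4$ grid of ordered regime-pairs (collapsing to about ten distinct cases once one eliminates empties and symmetries), and the special role of $A(N)=2$ must be handled separately in each cross case. There is no deep new idea beyond Lemmas 1.1--1.3; the value $2i+2$ on the second half is a shift of Lemma 1.2 designed precisely so that the modular reduction on the second half lands on the odd residues, while the extra $+4$ on odd indices of the first half and the value $A(N)=2$ are exactly the corrections needed to turn the non-permutation $2i\pmod{N+1}$ (which fails because $3\mid N+1$) into a genuine solution.
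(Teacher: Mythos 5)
Your overall strategy is the same as the paper's (range check, permutation check, then a case analysis of $|i-j|\neq|A(i)-A(j)|$ over pairs of regimes), and most of the individual cases you describe are handled correctly; but one of the arithmetic facts you lean on is false, and the one case that depends on it genuinely fails under your advertised dichotomy. You assert that neither $N$ nor $N\pm2$ is divisible by $3$, yet $N-2=12k$ \emph{is} divisible by $3$. This matters in the cross-case of an odd $i<N/2$ against $j=N$: there $A(i)=2i+4$ (no reduction) and $A(N)=2$, so the diagonal condition becomes $N-i=|A(N)-A(i)|=2i+2$, i.e.\ $3i=N-2=12k$, i.e.\ $i=4k$. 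This value is in range ($4\leq 4k<6k+1$) and is ruled out neither by the size of $j-i$ nor by non-divisibility by $3$; it is excluded only because $4k$ is even while $i$ must be odd in this branch. So this sub-case requires a parity argument, not the ``one sub-case too large, one sub-case kills divisibility by $3$'' pattern you propose, and as written your proof would stall exactly here. (The paper's own proof is also cavalier about $i=N$, treating it as ``set by the same formula'' as the odd first-half branch even though the modular reduction differs by $2(N+1)$, so this case deserves to be written out explicitly in any event.)

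A second, smaller omission: in the permutation step you reduce the cross-regime collisions to ``$A(N)=A(i)$ for some even $i<N/2$,'' but the two first-half branches both take even values, so you must also exclude $2i+4=2j$ for $i$ odd, $j$ even, $i,j<N/2$; this forces $j=i+2$, contradicting the opposite parities, so it is a one-line fix (and it is exactly the paper's case ``$i-j=2$, but $i$ and $j$ have different parity''). Also, the middle-segment values fill $[1,N-1]$, not $[1,N-2]$: the top value is $2(12k+1)-(12k+1)=12k+1=N-1$. With these repairs your argument goes through and coincides with the paper's.
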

\begin{proof}
For any $1 \leqslant i \leqslant N$ the value of $A(i)$ is not $0$ since in the first part of the formula $i<=\frac{N}{2}$ (or $A(N)=2$) and odd $i$ by the formula $2i+4\leqslant N < N+1$ or $A(i) \neq 0$. In the second part $i\leqslant \frac{N}{2}-1$ we have $2i\leqslant N-2 < N+1$. In the third part $N+2 \leqslant 2i+2 < 2N+2$ or $2i+2 \pmod{n+1} \neq 0$.

Now we will show that it is a permutation. For $i,j$ which are set by the same formula $A(i)\neq A(j)$.. In other cases
\begin{enumerate}
\item $i<\frac{N}{2}$ and $\frac{N}{2} \leqslant j < N$. Suppose the opposite $A(i)=A(j)$.
\begin{enumerate}
\item $i$ is odd or $i=N$. Then $2i+4 \pmod{N+1} = 2j+2 \pmod{N+1}$ which is $2i+4=2j+2-(N+1)$ or $2j+2-(N+1)=2$. Then we get $2(j-i+1)=N+1$ or $2j=N+1$ which is impossible because $n+1=12k+3$ is odd.
\item $i$ is even. Then $2i \pmod{N+1} = 2j+2 \pmod{N+1}$ or $2i=2j+2-(N+1)$. And now we get $2(j-i+1)=N+1=12k+3$ contradiction.
\end{enumerate}
\item $i,j < \frac{N}{2}$ and $i$ is even, $j$ is odd. Then $2i \pmod{N+1} = 2j+4 \pmod{N+1}$ or $i-j=2$, but $i$ and $j$ have different parity.
\end{enumerate}

Now we will show that $|i-j|\neq|A(i)-A(j)|$. Suppose the opposite. If  for $i$ and $j$  $A$ is set by the same formula it is obvious. In other cases
\begin{enumerate}
\item $j<i<\frac{N}{2}$, $j$ is odd, $i$ is even. Then 
\begin{enumerate}
\item $i-j=(2i \pmod{N+1})-(2j + 4 \pmod{N+1}) = 2i-2j-4$ or $i-j=4$, but they have different parity.
\item $i-j=(2j+4 \pmod{N+1})-(2i \pmod{N+1}) = 2j+4-2i$ or $3(i-j)=4$ which is impossible.
\end{enumerate}
\item $j<\frac{N}{2}$, $j$ is odd and $\frac{N}{2}\leqslant i < N$. Then 
\begin{enumerate}
\item $i-j=(2i+2 \pmod{N+1})-(2j+4 \pmod{N+1})=2i+2-(N+1)-(2j+4)=2i-2j - N - 3$ or $i - j = N + 3$, impossible.
\item $i-j=(2j+4 \pmod{N+1})-(2i+2 \pmod{N+1})=2j+4-(2i+2-N-1)=2j-2i+N+3$ or $3(i-j) =N+3$ or $3(i-j)=12k+5$, impossible.
\end{enumerate}
\item $j<\frac{N}{2}$, $j$ is even and $\frac{N}{2}\leqslant i < N$. Then 
\begin{enumerate}
\item $i-j=(2i+2 \pmod{N+1})-(2j \pmod{N+1})=2i+2-(N+1)-2j=2i-2j-N+1$ or $i - j = N-1$, impossible.
\item $i-j=(2j \pmod{N+1})-(2i+2 \pmod{N+1})=2j-(2i+2-N-1)=2j-2i+N-1$ or $3(i-j)=N-1$ or $3(i-j)=12k+1$, impossible.
\end{enumerate}
\end{enumerate}
Thus, $|i-j| \neq |A(i)-A(j)|$ and $A$ is a solution and for $N=12k+2$ there exists a solution with width $3$.
\end{proof}

\begin{Lemma}
Let $N=6k+3$. Then the following Queen function gives a solution
\begin{align*}
A(i) = \begin{cases}
2i+2 \pmod{N+1}, & i < \frac{N-1}{2}\\
2i+4 \pmod{N+1}, & i = \frac{N-1}{2}\\
2i+5 \pmod{N+1}, & i > \frac{N-1}{2}.\\
\end{cases}
\end{align*}
\end{Lemma}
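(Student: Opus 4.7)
The plan is to follow the same three-step template used in the four preceding lemmas: check that $A(i) \in \{1, \dots, N\}$ for every $i$ (so no value reduces to $0$ modulo $N+1$); show that $A$ is injective and hence a permutation; and finally verify the no-attack condition $|i-j| \neq |A(i) - A(j)|$ for all $i < j$. The first two steps are bookkeeping; the real work is the case analysis in the third step.

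A new feature here is that $N+1 = 6k+4$ is \emph{even}, so the recurring ``odd $=$ even'' contradictions used in the previous lemmas are no longer available. Instead I would rely on two arithmetic facts: $N \equiv 0 \pmod 3$ (so any equation forcing $3$ to divide one of $3k+1$, $6k+1$, $6k+4$, $9k+2$, $12k+2$, $12k+5$, etc.\ is immediately impossible); and $2i+5$ is odd while $N+1$ is even (which rules out $2i+5 \equiv 0 \pmod{N+1}$ and other parity-sensitive congruences). For the range step: on the first piece $2 \le 2i+2 \le 6k+2 < N+1$ requires no reduction; $A(3k+1) = 6k+6 - (6k+4) = 2$; for $3k+2 \le i \le 6k+1$ a single subtraction gives $A(i) = 2i - 6k + 1 \in [5, N]$; and for $i \in \{6k+2, 6k+3\}$ two subtractions give $A(i) = 1$ and $3$ respectively. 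For injectivity, within each linear piece the map is affine with leading coefficient $2$, and every cross-piece collision (for example $2i+2 \equiv 2j+5 \pmod{N+1}$) forces $2(j-i) \equiv \pm 3 \pmod{N+1}$, impossible because $N+1$ is even while $\pm 3$ is odd.

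For the no-attack condition I would enumerate ordered pairs $i < j$ by which of the three formulas defines each, exactly as in the $N=12k+2$ lemma. The ``pure'' same-formula cases collapse instantly to $j-i = 2(j-i)$, i.e.\ $1=2$. Each mixed case reduces, after clearing the modular reductions, either to an equation of the form $3 \cdot (\text{linear in } i, j) = c$ with $c \in \{3k+1,\ 6k+1,\ 6k+4,\ 9k+2,\ 12k+2,\ 12k+5\}$, all of which are not divisible by $3$; or to an equation of the form $j-i = N + c'$ which is dismissed because $j-i \le N-1$ is too small. This covers pairs of the form $(i, j)$ with $i < 3k+1$ and $j \in [3k+2, 6k+1]$, with $i = 3k+1$ and $j \in [3k+2, 6k+1]$, and with either of $i < 3k+1$ or $i \in [3k+2, 6k+1]$ against $j \in \{6k+2, 6k+3\}$.

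I expect the most delicate spots to be those involving the isolated index $i = 3k+1$ (where $A(i) = 2$) and the doubly wrapped indices $j \in \{6k+2, 6k+3\}$ (where $A(j) \in \{1, 3\}$), because there $|A(i)-A(j)|$ can be as small as $1$ and the mod-$3$ tricks are less immediate. In particular, the pair $(i,j) = (3k+1, 6k+2)$ yields $|A(i)-A(j)| = 1$ and $|i-j| = 3k+1$, which match only when $k=0$, i.e.\ $N=3$; this is excluded by the hypothesis $N > 3$ of the Width Theorem, so $k \ge 1$ and the coincidence does not occur. Once this boundary case is set aside, the remaining verifications are routine, and the function $A$ exhibits a solution of width $3$ for $N = 6k+3$.
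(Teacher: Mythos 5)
Your proposal is correct and follows essentially the same route as the paper: a range check followed by a case analysis of $|i-j|\neq|A(i)-A(j)|$ organized by which formula defines each index and by how many times $N+1$ is subtracted, with every case dismissed either because $3\nmid c$ for $c\in\{3k+1,\,6k+1,\,6k+4,\,9k+2,\,12k+2,\,12k+5\}$ or because $j-i$ would fall out of range. If anything you are slightly more careful than the paper, which omits the injectivity check for this lemma and only implicitly relies on $N>3$ at the pair $(i,j)=\bigl(\tfrac{N-1}{2},\,N-1\bigr)$, where you explicitly flag and exclude the $k=0$ coincidence.
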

\begin{proof}
For any $1 \leqslant i \leqslant N$ the value of $A(i)$ is not $0$ since $i < \frac{N-1}{2}$, $A(i) = 2i \pmod{N+1} = 2i \neq 0$. For $i = \frac{N-1}{2}$ $A(i) = 2$. For $\frac{N-1}{2} < i < N-1$ $A(i) = 2i+5 \pmod{N+1} = 2i - N + 4 \neq 0$ and $A(N-1) = 1$, $A(n) = 3$. 


Now we will show that $|i-j|\neq|A(i)-A(j)|$. Suppose the opposite.
\begin{enumerate}
\item $i < \frac{N-1}{2}$, $j = \frac{N-1}{2}$. Then $j - i = \pm (2 - 2i - 2) = \pm 2i.$
\begin{enumerate}
\item $j-i=2i$. Then $2i=j=\frac{N-1}{2}=\frac{6k+2}{2}=3k+1$.
\item $j-i=-2i$. Then $i+j=0$.
\end{enumerate}
\item $i < \frac{N-1}{2}$, $\frac{N-1}{2} < j < N-1$. Then $j-i = \pm (2i+5-N-1-2i-2) = \pm (2(j-i)-N+2).$
\begin{enumerate}
\item $j-i = 2(j-i)-N+2$. Then $j-i=N-2$.
\item $j-i = -2(j-i)+N-2$. Then $3(j-i)=N-2=6k+1$.
\end{enumerate}
\item $i < \frac{N-1}{2}$, $j \geq N-1$. Then $j-i = \pm(2j+5-2N-2-2i-2)=\pm(2(j - i)-2N+1)$.
\begin{enumerate}
\item $j-i=2(j-i)-2N+1$. Then $j-i=2N-1$.
\item $j-i=-2(j-i)+2N-1$. Then $3(j-i)=2N-1=12k+5$.
\end{enumerate}
\item $i=\frac{N-1}{2}$, $\frac{N-1}{2} < j < N-1$. Then $j-i=\pm(2j+5-N-1-2)=\pm(2j-N+2)$.
\begin{enumerate}
\item $j-i=2j-N+2$. Then $j=N-2-i=\frac{2N-4-N+1}{2}=\frac{N-3}{2}<\frac{N-1}{2}.$
\item $j-i=N-2-2j$. Then $3j=N-2+i=\frac{2N-4+N-1}{2}=\frac{3N-5}{2}<\frac{3(N-1)}{2}$.
\end{enumerate}
\item $i=\frac{N-1}{2}$, $j \geqslant N-1$. Then $j-i=\pm(2j+5-2N-2-2)=\pm(2j-2N+1)$.
\begin{enumerate}
\item $j-i=2j-2N+1$. Then $j=2N-1-i=\frac{3N-1}{2}>N$.
\item $j-i=2N-2j-1$. Then $3j=2N-1+i=\frac{5N-3}{2}<3(N-1).$
\end{enumerate}
\item $\frac{N-1}{2}<i<N-1$, $j \geqslant N-1$. Then $j-i=\pm(2j+5-2N-2-2i-5+N+1)=\pm(2(j-i)-N-1).$
\begin{enumerate}
\item $j-i=2(j-i)-N-1.$ Then $j-i=N+1$.
\item $j-i=-2(j-i)+N+1$. Then $3(j-i)=n+1=6k+4.$
\end{enumerate}
\end{enumerate}
Thus, $|i-j| \neq |A(i)-A(j)|$ and $A$ is a solution and for $N=6k+3$ there exists a solution with width $3$.
\end{proof}
\noindent Thus, the width theorem is proved by building the examples of Queen functions for arbitrary values of $N$.
\end{proof}

\begin{Remark}
The width of the function in the theorem cannot be reduced, since for $N=15$ our program checked that there are no solutions that can be represented as a Queen function with width $1$ or $2$.
\end{Remark}

Also, all known solutions (1874 --- Glaisher, 1969 --- Hoffman, Loessi, Moore, 1991 --- Bernhardsson) have a greater width.

Here is the example of our solution and the easiest known solution for $N=20$.

\begin{figure}[H]
\begin{minipage}{.5\textwidth}
  \centering
  \includegraphics[width=.9\linewidth]{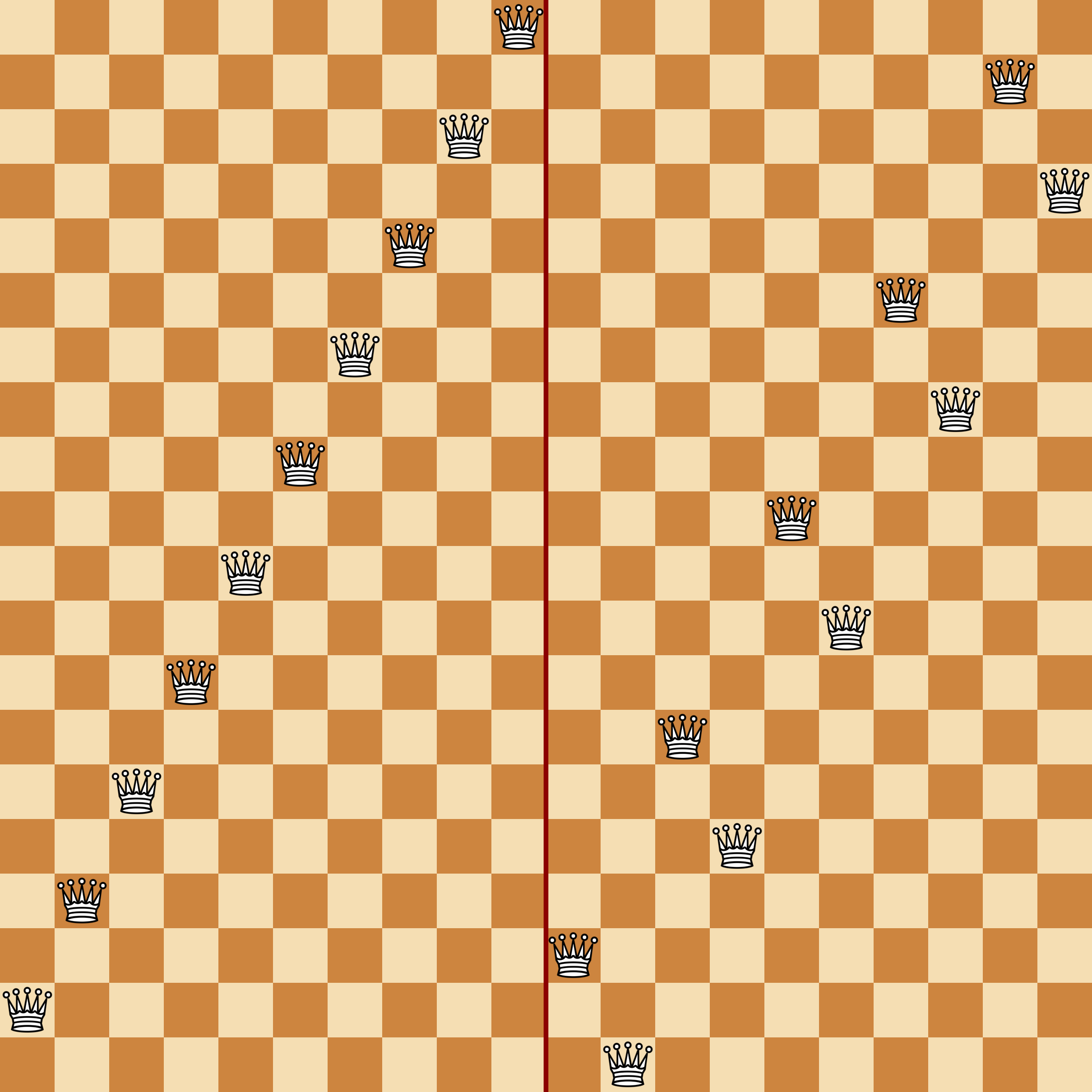}
  \captionof{figure}{\footnotesize Author's solution with width $2$}
\end{minipage}%
\begin{minipage}{.5\textwidth}
  \centering
  \includegraphics[width=.9\linewidth]{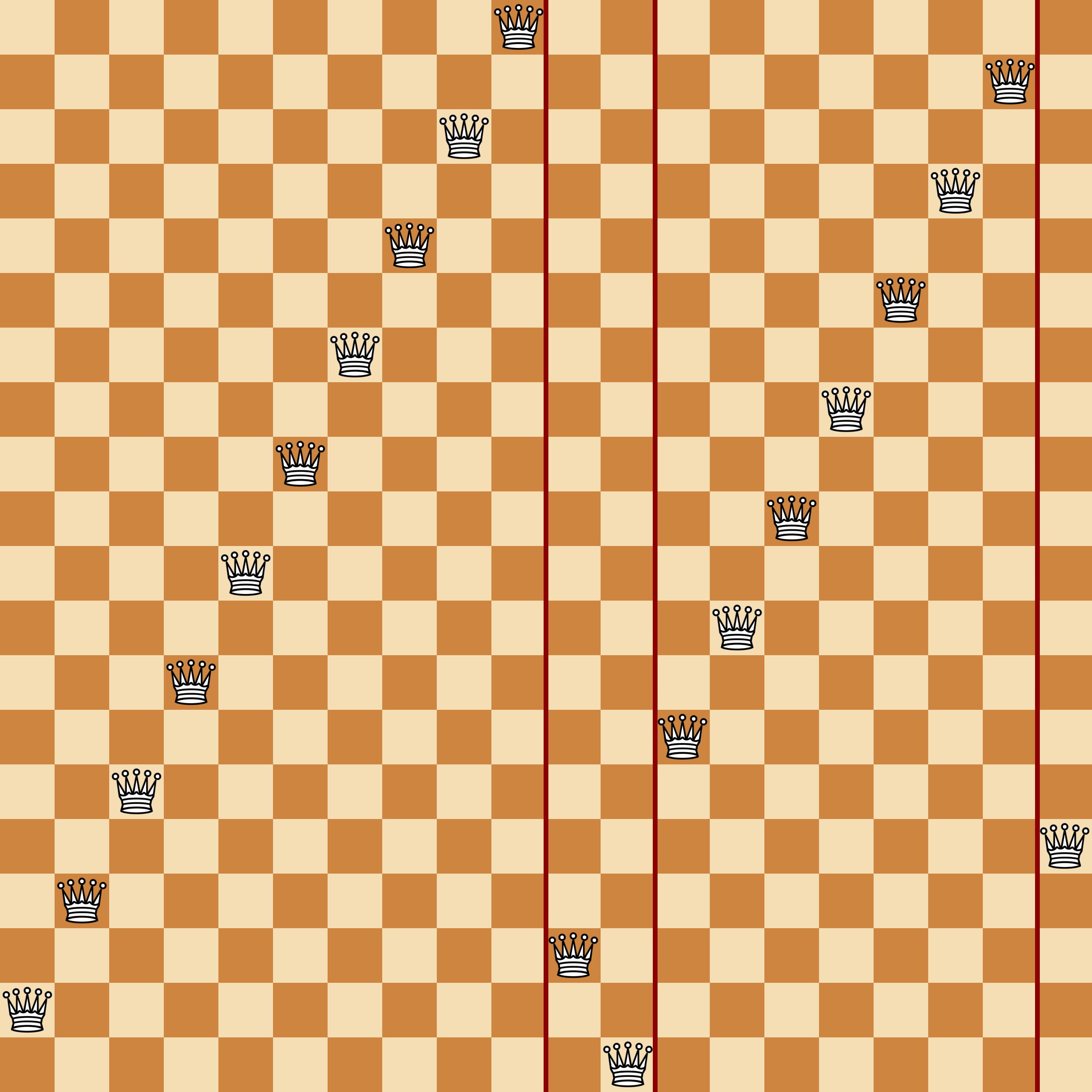}
  \captionof{figure}{\footnotesize Bernhardsson's solution with width $4$}
\end{minipage}
\end{figure}

\section{Composition Theorem}

\begin{Definition}
Given to arbitrary solutions $A$ and $B$. Then we can obtain new arrangement by insertion of one arrangement into queens’ positions of another arrangement. The obtained arrangement is called {\bf composition $A\otimes B$}.
\end{Definition}

For example consider two solutions $A$ and $B$:

\begin{figure}[H]
\begin{minipage}{.5\textwidth}
  \centering
  \includegraphics[width=.2\linewidth]{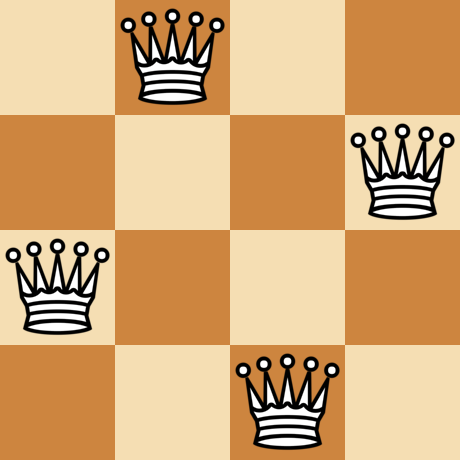}
\end{minipage}%
\begin{minipage}{.5\textwidth}
  \centering
  \includegraphics[width=.25\linewidth]{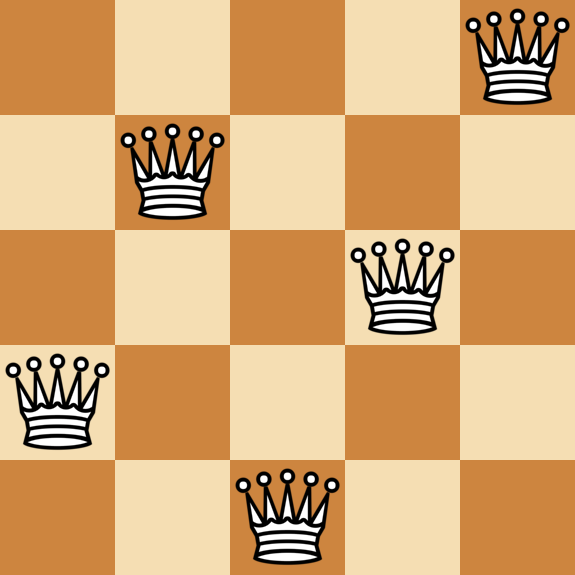}
\end{minipage}
\end{figure}

By we definition of composition we can obtain two new arrangements:

\begin{figure}[H]
\begin{minipage}{.5\textwidth}
	\centering
	\includegraphics[width=.9\linewidth]{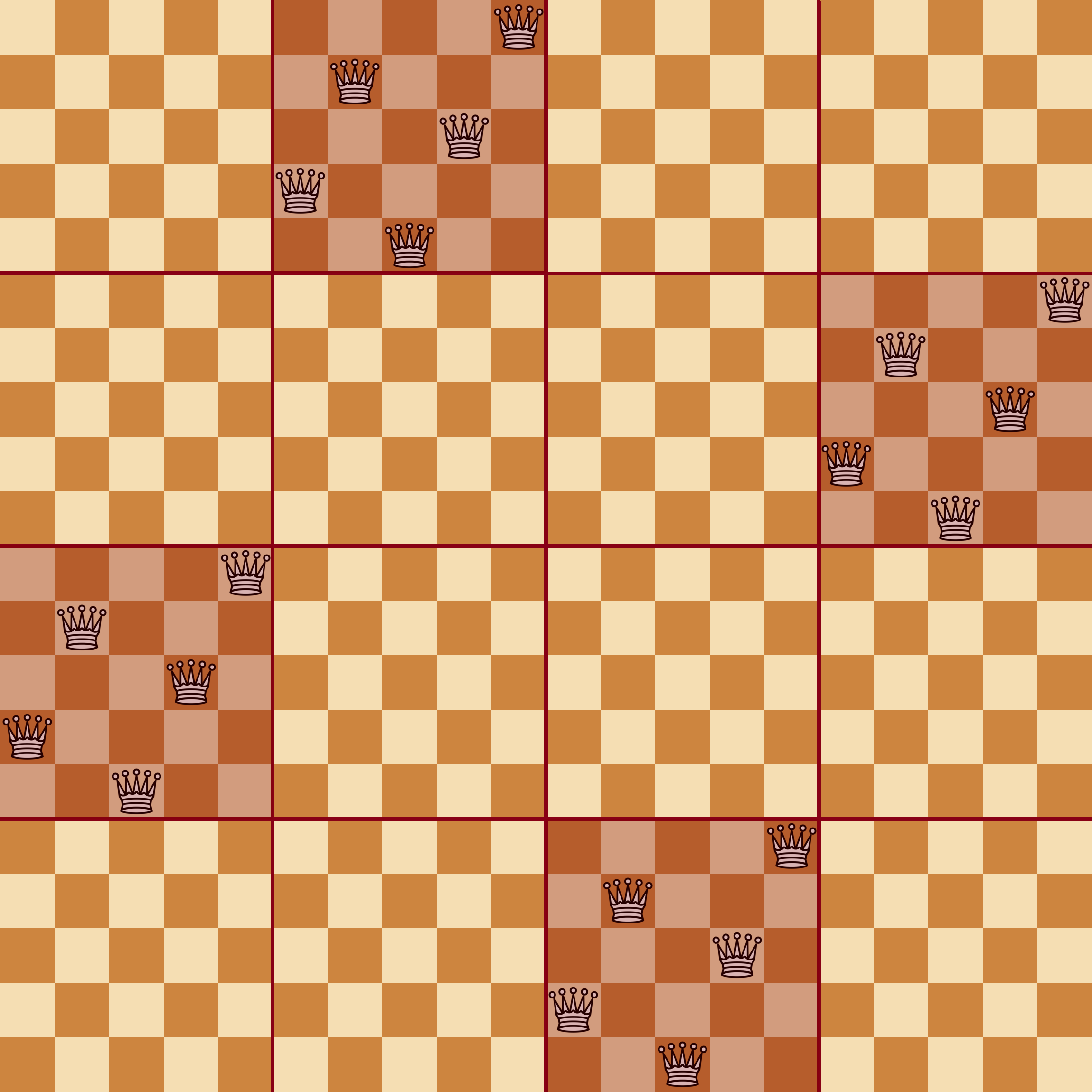}
	\captionof{figure}{$A\otimes B$ is a solution}
\end{minipage}%
\begin{minipage}{.5\textwidth}
	\centering
	\includegraphics[width=.9\linewidth]{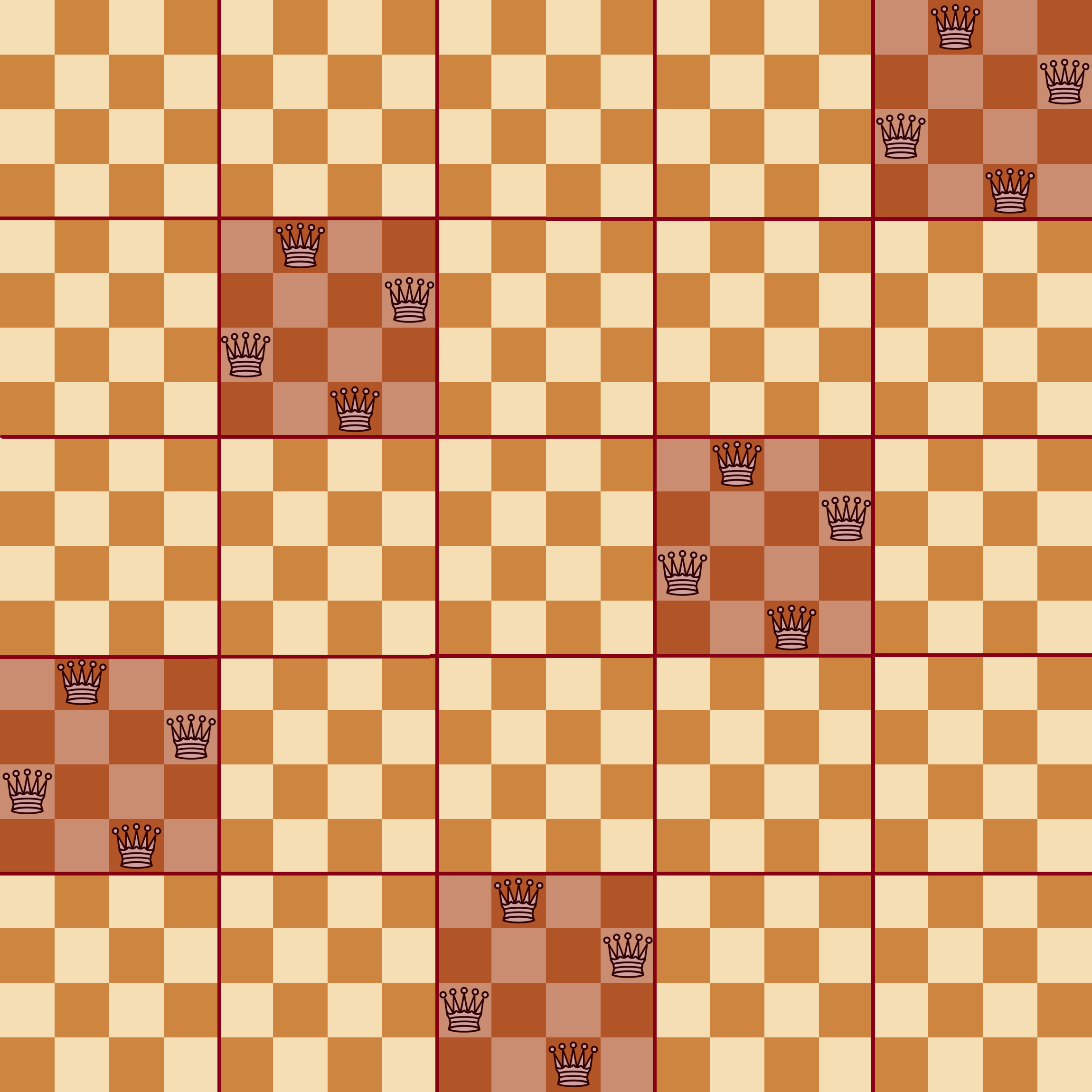}
	\captionof{figure}{$B\otimes A$ is not a solution}
\end{minipage}
\end{figure}

It is easy to see that composition of solutions is not always a solution. Thus we found necessary and sufficient condition of being a composition of solutions a solution:

{\bf The Composition Theorem.} 
{\it Composition $A \otimes B$ of the solutions $A$ and $B$ is a solution if and only if both of the following conditions hold:
\begin{enumerate}
\item $\{B(i) - i \pmod{|B|} \ | \ 1 \leqslant i \leqslant |B| \} = \mathbb{Z}_{|B|}.$
\item $\{B(i) + i \pmod{|B|} \ | \ 1 \leqslant i \leqslant |B| \} = \mathbb{Z}_{|B|}.$
\end{enumerate}}

To prove this theorem we will consider more general construction and prove the same criterion for it.

\begin{Definition}
{\bf Generalized composition} of solutions $(A_1,A_2,\ldots, A_{|B|}) 
{\otimes} B$ is an arrangement which is defined by the following rule $$C(|B|(i-1) + j) = |B| (A_j(i)-1) + B(j),$$ where $|A_i|=|A_j| \ (1\leq i < j \leq |B|),$ $1\leq i \leq |A|$, $1 \leq j \leq |B|$.
\end{Definition}

This construction is taken from \cite{Rivin} and firstly introduced by Polya in \cite{Polya}. 

Obviously, if arrangements $(A_1,A_2,\ldots, A_{|B|})$ are the same, then we get usual composition of solutions. 

\begin{Theorem}
Generalized composition $(A_1,A_2,\ldots, A_{|B|}) \otimes B$ of the solutions is a solution if and only if both of the following conditions hold:
\begin{enumerate}
\item $\{B(i) - i \pmod{|B|} \ | \ 1 \leqslant i \leqslant |B| \} = \mathbb{Z}_{|B|}.$
\item $\{B(i) + i \pmod{|B|} \ | \ 1 \leqslant i \leqslant |B| \} = \mathbb{Z}_{|B|}.$
\end{enumerate}
\end{Theorem}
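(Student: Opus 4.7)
The plan is to separate the between-block and within-block coordinates modulo $m := |B|$, reduce the attack equation to a bounded diophantine condition, and show that the only obstruction to $C$ being a solution is a residue coincidence on $B$'s diagonals, which the two conditions are precisely engineered to forbid. Set $n = |A_1| = \cdots = |A_m|$; each $k \in [1, mn]$ factors uniquely as $k = m(i-1) + j$ with $i \in [1, n]$, $j \in [1, m]$, and the defining formula reads $C(k) = m(A_j(i) - 1) + B(j)$. For two indices $k_a = m(i_a-1) + j_a$, the hypothetical attack $C(k_2) - C(k_1) = \sigma(k_2 - k_1)$ with $\sigma \in \{\pm 1\}$ unfolds into
$$m\bigl[A_{j_2}(i_2) - A_{j_1}(i_1) - \sigma(i_2 - i_1)\bigr] \;=\; \sigma(j_2 - j_1) - \bigl[B(j_2) - B(j_1)\bigr],$$
whose right-hand side lies in $(-2m, 2m)$, forcing the bracketed integer on the left into $\{-1, 0, 1\}$.

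I would then split on this integer. The value $0$ forces $A_{j_2}(i_2) - A_{j_1}(i_1) = \sigma(i_2 - i_1)$ together with $B(j_2) - B(j_1) = \sigma(j_2 - j_1)$: for $j_1 = j_2$ the first is a diagonal conflict inside $A_{j_1}$, and for $j_1 \neq j_2$ the second is a diagonal conflict inside $B$, both impossible. The value $\pm 1$ reduces the $B$-part to $B(j_1) - \sigma j_1 \equiv B(j_2) - \sigma j_2 \pmod m$ with $j_1 \neq j_2$, which is exactly a failure of condition~1 when $\sigma = +1$ and of condition~2 when $\sigma = -1$; the $A$-part becomes $A_{j_2}(i_2) - \sigma i_2 = A_{j_1}(i_1) - \sigma i_1 \pm 1$. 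Sufficiency (Polya's direction) drops out at once: if both conditions hold, the $B$-congruences have no solutions with $j_1 \neq j_2$, so no attack can occur.

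For necessity, suppose condition~1 fails and pick $j_1 \neq j_2$ with $B(j_1) - j_1 \equiv B(j_2) - j_2 \pmod m$; since $B$ is a solution the actual integer gap must be $\pm m$, which fixes the sign in the required $A$-equation. What remains is to produce $i_1, i_2$ with $A_{j_2}(i_2) - i_2 = A_{j_1}(i_1) - i_1 \pm 1$, and this is the heart of the argument. The main obstacle I would isolate as a combinatorial lemma: for any two $n$-queens solutions $A, A'$ with $n \geq 4$, the diagonal-difference sets $S := \{A(i) - i\}$ and $S' := \{A'(i) - i\}$ satisfy $S' \cap (S + 1) \neq \emptyset$. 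The proof is a pigeonhole inside $[1-n, n]$: both $S, S' \subseteq [1-n, n-1]$ with $|S| = |S'| = n$, so $|S + 1| + |S'| = 2n = |[1-n, n]|$; an empty intersection would force $S' \cup (S+1) = [1-n, n]$, which pins down $A(1) = n$ and $A'(n) = 1$, and then the location of $2-n$ in this forced partition yields either a repeated value in $A'$ or the antidiagonal coincidence $A(1) = n$, $A(n) = 1$ inside $A$, each a contradiction. Condition~2 is handled by the symmetric lemma applied to the antidiagonal sets $\{A(i) + i\}$.
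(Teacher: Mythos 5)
Your proof is correct, and while it follows the paper's overall skeleton --- reduce the attack equation to $m\cdot[\text{bracket}]=\text{(bounded $B$-term)}$, force the bracket into $\{-1,0,1\}$, and split on the sign $\sigma$ --- the heart of the necessity argument is handled by a genuinely different and, in fact, more robust lemma. The paper, at the corresponding step, sorts the values $A_s(i)+i$, asserts that consecutive distinct values differ by at least $2$, and compares the resulting range $2|A|-2$ against a maximal possible range of $2|A|-3$. Both of those assertions are only justified when all the values come from a \emph{single} solution: if $A_{j_1}\neq A_{j_2}$, two values drawn from the same $A_{j_1}$ may well differ by $1$ without contradicting the hypothesis, and the combined range of two distinct solutions can reach $2|A|-2$ (one may have $A_{j_1}(1)=1$ while the other has $A_{j_2}(|A|)=|A|$). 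So the paper's count really only proves the ordinary Composition Theorem, whereas the theorem being proved is the generalized one. Your lemma --- $S'\cap(S+1)=\emptyset$ forces $S'\cup(S+1)$ to exhaust the full interval of $2n$ integers, which pins down $A(1)=n$ and $A'(n)=1$ and then makes the element $2-n$ unplaceable --- works verbatim for two \emph{different} solutions, and your restriction to $n\geq 4$ correctly excludes the degenerate case $|A_j|=1$ (where the lemma, and indeed the theorem, fails, since the composition collapses to $B$). Two small points worth making explicit if you write this up: state that the lemma is applied to the ordered pair $(A_{j_1},A_{j_2})$ or $(A_{j_2},A_{j_1})$ according to which sign the $\pm m$ gap in $B$ forces, and add the one-line check that $C$ is a permutation (read off $j$ from $C(k)\bmod m$, then $i$ from the quotient).
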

\begin{proof}
Sufficiency was proved in 1918 by Polya in \cite{Polya}.

We will prove it again and also prove more complex part of this statement ~--- necessity.

Note that any integer $1 \leqslant m \leqslant |A||B|$ can be uniquely represented as $m = |B|(i-1)+j$, ($1 \leqslant i \leqslant |A|$, $1 \leqslant j \leqslant |B|$). Then for the chessboard $|A||B| \times |A||B|$ we will set the queens' arrangements by the formula
\begin{align*}
C(|B|(i - 1) + j) = |B|(A_j(i)-1) + B(j).
\end{align*}
Firstly, we will prove that this arrangement is a permutation. It is obvious since for different numbers their map images are either in different intervals or in one interval but with different values because $A_i, B$ are solutions.

Secondly, we will prove that this permutation is a solution. Let $1\leqslant r < s \leqslant |A||B|$. We have to prove $s-r \neq \pm(C(s) - C(r)) \neq \pm (C(|B|(i_1-1) + j_1) - C(|B|(i_2-1)+j_2))) \neq \pm (|B|(A_{j_1}(i_1)-1)+B(j_1)-|B|(A_{j_2}(i_2)-1)-B(j_2))\neq\pm(|B|(A_{j_1}(i_1)-A_{j_2}(i_2))+(B(j_1)-B(j_2)))$. We get that \begin{align}\label{gen}
|B|((i_1-i_2)\pm ((A_{j_2}(i_2)-A_{j_1}(i_1))) \neq \pm(B(j_1)-B(j_2))-(j_1-j_2).
\end{align}

First, we will prove sufficiency. Suppose the opposite: inequality \eqref{gen} doesn't hold. The conditions of the theorem are satisfied. Specifically, integers $B(i)-i$ are different modulo $|B|$ and integers $B(i)+i$ are different modulo $|B|$. Then the right-hand part of \eqref{gen} (now it is equality) is not divisible by $|B|$ and the required equality doesn't hold. 

Now we will prove the necessity. We consider cases of different sign before the expression:

\begin{enumerate}
\item $$|B|((A_{j_1}(i_1) + i_1) - (A_{j_2}(i_2) + i_2)) \neq (B(j_2) + j_2) - (B(j_1))+j_1).$$

Suppose the opposite. Such integers $j_1,j_2$, that $B(j_1) + j_1 \equiv B(j_2) + j_2 \pmod |B|$ exist. Then $|B|$ divides $B(j_2) + j_2 -(B(j_1) + j_1)$. Note that $-2|B|+3 \leqslant B(j_2) + j_2 -(B(j_2) + j_2) \leqslant 2|B|-3$. Then $B(j_2) + j_2 -(B(j_2) + j_2) = \pm |B|$. Then we will divide both parts of the inequality by $|B|$ and we get $(A_{j_1}(i_1) + i_1) - (A_{j_2}(i_2) + i_2) \neq \pm 1$. What is more $\forall 1 \leqslant p \leqslant |B| \quad 2 \leqslant A_p(i)+i \leqslant 2|A|$. Then we arrange in non-decreasing order $|A||B|$ integers $A_s(i)+i$ so, that $(A_{j_1}(i_1) + i_1) - (A_{j_2}(i_2) + i_2) \neq \pm 1$. There will be more than $|A|-1$ different integers among them. The following inequality $A_p(i)+i-(A_q(j)+j) \geqslant 2$ holds for them. Then we will add up all these inequalities and get that the difference between the largest and the least numbers is not fewer than $2|A|-2$. But the largest possible difference is $2|A|-3$. 

\item $$|B|((A_{j_2}(i_2) - i_2) - (A_{j_1}(i_1) - i_1)) \neq (B(j_1) - j_1) - (B(j_2))-j_2).$$

Suppose the opposite. Such integers $j_1,j_2$, that $B(j_1) - j_1 \equiv B(j_2) - j_2 \pmod |B|$ exist. Then $|B|$ divides $(B(j_1) - j_1) - (B(j_2))-j_2)$. Note that $-2|B|+3 \leqslant (B(j_1) - j_1) - (B(j_2))-j_2) \leqslant 2|B|-3$. Then $(B(j_1) - j_1) - (B(j_2))-j_2) = \pm |B|$. Then we will divide both parts of the inequality by $|B|$ and we get $(A_{j_2}(i_2) - i_2) - (A_{j_1}(i_1) - i_1) \neq \pm 1$. What is more $\forall 1 \leqslant p \leqslant |B| \quad 1-|A| \leqslant A_p(i)-i \leqslant |A| - 1$. Then we arrange in non-decreasing order $|A||B|$ integers $A_s(i)-i$ so, that $(A_{j_1}(i_1) - i_1) - (A_{j_2}(i_2) - i_2) \neq \pm 1$. here will be more than $|A|-1$ different integers among them. The following inequality $A_p(i)-i-(A_q(j)-j) \geqslant 2$ holds for them. Then we will add up all these inequalities and get that the difference between the largest and the least numbers is not fewer than $2|A|-2$. But the largest possible difference is $2|A|-3$. 
\end{enumerate}
Necessity is proved.
\end{proof}

\begin{proof}[Proof of The Composition Theorem]
This statement is a special case of {\bf the generalized composition theorem 2.3.} where $A_1=A_2=\ldots=A_{|B|}$.
\end{proof}

The next theorem is proved by Hedayat in \cite{Knut Vik design}:

\begin{Statement}[Hedayat's Lemma 2.2 in \cite{Knut Vik design}] \label{hedayat}
A permutation $B$ such that $\{B(i) - i \pmod{|B|} \ | \ 1 \leqslant i \leqslant |B| \} = \mathbb{Z}_{|B|}$ and $\{B(i) + i \pmod{|B|} \ | \ 1 \leqslant i \leqslant |B| \} = \mathbb{Z}_{|B|}$ exists if and only if $\gcd (|B|, 6) = 1$.
\end{Statement}

Using this statement we obtain the following two corollaries.
\begin{Corollary}
If $A\otimes B$ is a solution, then $\gcd(|B|,6)=1$. The opposite implication does not hold.
\end{Corollary}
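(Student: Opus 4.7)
The plan is to split the corollary into its two claims and dispatch each with results already in hand. For the implication, I would observe that if $A \otimes B$ is a solution then, by the Composition Theorem, both modular conditions (1) and (2) hold for $B$. Statement~\ref{hedayat} then forces $\gcd(|B|, 6) = 1$. This is essentially a one-line deduction and requires no further work.

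To refute the converse it suffices to exhibit one solution $B$ with $\gcd(|B|, 6) = 1$ that fails at least one of the modular conditions; the Composition Theorem will then guarantee that $A \otimes B$ is not a solution for any choice of $A$. The smallest size admitting such a $B$ is $|B| = 7$, because for $|B| = 5$ a short check shows every solution is already toroidal. I propose the explicit candidate $B = (5, 1, 6, 4, 2, 7, 3)$: a direct diagonal check, using the characterization $|i-j|\neq|B(i)-B(j)|$ from Section~1, confirms $B$ is a solution, while the residues $B(i) - i \pmod 7$ evaluate to $(4, 6, 3, 0, 4, 1, 3)$, so condition (1) is violated (both $3$ and $4$ occur twice). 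Pairing this $B$ with any solution $A$ (for instance the standard $4$-Queens solution $(2, 4, 1, 3)$) then yields a composition that, by the Composition Theorem, is not a solution.

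The only non-routine ingredient is locating the counterexample. Every \emph{linear} arrangement $B(i) \equiv ai + b \pmod{|B|+1}$ used throughout the Width Theorem automatically satisfies both modular conditions, so the example must be sought outside that family; one can either search by hand among the $40$ solutions of the $7$-Queens Problem (only $28$ of which are toroidal) or generate them computationally, as the author does elsewhere in the paper. Once one non-toroidal solution is displayed and verified, the proof is immediate; no new theoretical tool is required beyond the Composition Theorem and Statement~\ref{hedayat}.
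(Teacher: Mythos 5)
Your proof is correct and follows essentially the same route as the paper: the forward implication is the Composition Theorem combined with Statement~\ref{hedayat}, and the converse is refuted by exhibiting a $7\times 7$ solution violating one of the modular conditions (the paper uses $B=(4,7,5,2,6,1,3)$, you use $(5,1,6,4,2,7,3)$; both work). Your counterexample checks out: it is a valid $7$-queens solution and the residues $B(i)-i \pmod 7$ repeat the values $3$ and $4$, so condition (1) fails.
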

\begin{proof}
If $A\otimes B$ is a solution, then $\{B(i) - i \pmod{|B|} \ | \ 1 \leqslant i \leqslant |B| \} = \mathbb{Z}_{|B|};$ $\{B(i) + i \pmod{|B|} \ | \ 1 \leqslant i \leqslant |B| \} = \mathbb{Z}_{|B|}$ by the composition theorem. And finally by the previous statement $\gcd(|B|,6)=1$.

But the opposite does not hold because for example for $|B|=7$ there exists a solution $$B = 
\begin{pmatrix}
1 & 2 & 3 & 4 & 5 & 6 & 7\\
4 & 7 & 5 & 2 & 6 & 1 & 3\\
\end{pmatrix},$$ which doesn't satisfy the required condition.
\end{proof}

\begin{Corollary}
If $A \otimes B$ is a solution, then $C \otimes B$ is a solution for any solution $C$.
\end{Corollary}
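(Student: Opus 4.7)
The plan is to observe that the two conditions in the Composition Theorem are entirely conditions on $B$: they do not reference $A$ at all. Once we notice this, the corollary follows immediately from the biconditional, applied first in the necessity direction to $A\otimes B$ and then in the sufficiency direction to $C\otimes B$.

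Concretely, I would proceed as follows. First, assume $A\otimes B$ is a solution. By the necessity part of the Composition Theorem (which has just been proved), the permutation $B$ must satisfy
\[
\{B(i)-i \pmod{|B|} \mid 1\leqslant i\leqslant |B|\}=\mathbb{Z}_{|B|}
\quad\text{and}\quad
\{B(i)+i \pmod{|B|} \mid 1\leqslant i\leqslant |B|\}=\mathbb{Z}_{|B|}.
\]
Now let $C$ be any solution. Since the two displayed conditions make no mention of $A$, they remain valid when one considers the composition $C\otimes B$ in place of $A\otimes B$. Applying the sufficiency part of the Composition Theorem to the pair $(C,B)$ then yields that $C\otimes B$ is a solution.

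There is no real obstacle here beyond checking that the conditions in the Composition Theorem are indeed asymmetric between the two factors, depending solely on the right-hand factor $B$; a glance at the statement confirms this. Consequently, the corollary is essentially a tautological consequence of the biconditional form of the Composition Theorem, and no further case analysis or computation is required.
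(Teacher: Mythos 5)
Your argument is correct and is essentially the same as the paper's: the paper also observes that the criterion in the Composition Theorem depends only on $B$ and not on the first factor, so necessity applied to $A\otimes B$ plus sufficiency applied to $C\otimes B$ gives the result. You merely spell out the two directions of the biconditional more explicitly than the paper does.
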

\begin{proof}
Based on the composition theorem, it is easy to see that criterion of being a composition of solutions $A\otimes B$ doesn't depend on the first arrangement $A$.
\end{proof}

\section{Connection between width and composition: conjecture}

It is easy to see that the width of the composition is quite large and it is impossible to find a constant bound on its width. According to this we introduce the next definition:

\begin{Definition}
Number $N$ is called {\bf $\bf Q$-irreducible}, if none of the $N\times N$ solutions can be represented as a composition of smaller boards.
\end{Definition}

\begin{Statement}
Number $N$ is $Q$-irreducible if and only if $N$ equals either $p$, or $2p$, or $3p$, or $2^k3^l$, where p is prime and $k,l \in \mathbb{N}_0$.
\end{Statement}
\begin{proof}
It is easy to see that if $N$ is equal to either $p$, or $2p$, or $3p$, or $2^k3^l$, then it is $Q$-irreducible.

Now we prove the necessity. Suppose the opposite. $N$ is neither $p$ or $2p$ or $3p$ or $2^k3^l$. 
Firstly, if $N=np$ (where $n > 3$), then there exists a solution which is a composition of arrangements $A$ and $B$, where $|A|=n$ and $|B|=p$.
Secondly, if $N=2^k3^l s$ (where $s \neq p$), then $s$ is not divisible by $2$ and $3$ and consequently $\gcd (s,6) = 1$. Thus, for such $N$ there exists a solution which is composition of smaller boards.
\end{proof}

Also operation of rotation of the board doesn't save the width of representation. Thus, we introduce the next definition:

\begin{Definition}
{\bf A fundamental set} of solutions is a set of arrangements which generate all solutions to the $N$-Queens Problem by the rotation and reflection of the board. 
\end{Definition}

Since generalized composition gives lots of solutions with large width, we consider $Q$-irreducible $N$. Also a queen can be arranged in the corner of the smaller board and $N \times N$ solution can be obtained. That is way $N-1$ must be $Q$-irreducible.

\begin{Conjecture}
If numbers $N-1$ and $N$ are $Q$-irreducible, then there exists a set of fundamental solutions which can be represented as a Queen function with a width less or equal to $4$.
\end{Conjecture}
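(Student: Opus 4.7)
The plan is to proceed by case analysis based on the classification of $Q$-irreducible integers given in Statement 3.2. Since $Q$-irreducibility forces $N \in \{p,\, 2p,\, 3p,\, 2^k3^l\}$, the hypothesis that both $N-1$ and $N$ are $Q$-irreducible restricts us to a relatively sparse sequence of integers, and the first step is to extract this sequence explicitly. This is exactly where the two number-theoretic questions raised in the introduction enter: the admissible pairs essentially correspond to primes of the form $2^k 3^l - 1$ and to Sophie Germain–type pairs $(p, 2p+1)$, together with a handful of sporadic cases. A clean description of this set is needed so that the subsequent arguments can be uniformised across cases.

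Next, I would extend the modular constructions used in the proof of the Width Theorem. Those constructions all have the shape $A(i) = ai + \varepsilon(i) \pmod{N+1}$, with $a=2$ and a piecewise constant correction $\varepsilon$ supported on at most three intervals. To build a candidate fundamental set I would consider the whole family
\begin{equation*}
A_{a,\varepsilon}(i) \;=\; ai + \varepsilon(i) \pmod{N+1},
\qquad a \in \mathbb{Z}_{N+1}^{*},
\end{equation*}
where $\varepsilon$ is piecewise constant on a partition into at most four segments (with possibly different constants on even and odd arguments, as in the Queen function definition). The extra segment compared to the Width Theorem is precisely the degree of freedom that lets us meet the diagonal condition $|i-j|\neq |A(i)-A(j)|$ for many different $a$'s simultaneously. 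The computations justifying each case should mirror the checks in Lemmas 1.1–1.5: reduce $|i-j| = \pm(ai-aj + \text{correction})$ modulo $N+1$ and use divisibility obstructions coming from $\gcd(a\pm 1, N+1)$ and $\gcd(a\pm 3, N+1)$.

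Third, I would set up a lemma describing how the dihedral group $D_4$ of board symmetries acts on the parameters $(a,\varepsilon)$. Reflection in a vertical axis sends $A(i) \mapsto A(N+1-i)$, which amounts to replacing $a$ by $-a$ and adjusting $\varepsilon$; reflection in a horizontal axis sends $A(i) \mapsto N+1-A(i)$; and rotation is the composition. If the parametric family above is closed under these substitutions, then one representative of each $D_4$-orbit of $(a,\varepsilon)$ is enough. The task then reduces to producing, for each symmetry class of $N\times N$ solutions, at least one member of the family in its orbit.

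The main obstacle will be the final surjectivity step: proving that every $D_4$-equivalence class of $N\times N$ solutions actually contains a width-$\leq 4$ representative of the above modular form, rather than merely many of them. This is the part the author has verified computationally only up to $N=14$. I expect the key to be exploiting $Q$-irreducibility of both $N-1$ and $N$ as a rigidity hypothesis: $Q$-irreducibility of $N$ blocks the compositional mechanism of Section 2 from producing high-width solutions, while $Q$-irreducibility of $N-1$ rules out the ``queen in the corner'' reduction. Together these should force any solution, when restricted to large consecutive blocks of columns, to behave like multiplication by some $a \in \mathbb{Z}_{N+1}^{*}$ up to bounded error, and the proof would consist in turning this structural statement into an algorithm that rewrites an arbitrary solution as $A_{a,\varepsilon}$ with at most four breakpoints, after applying a suitable element of $D_4$. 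Establishing this structural rigidity rigorously, as opposed to verifying it case by case, is where I expect the real work to lie.
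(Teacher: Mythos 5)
The statement you are trying to prove is not proved in the paper at all: it is stated as a \emph{Conjecture}, supported only by a computer check for $N\leqslant 14$, and the author explicitly leaves it open. Your text is likewise not a proof but a research plan, and by your own admission the decisive step --- showing that \emph{every} $D_4$-orbit of solutions contains a representative of the parametric form $A_{a,\varepsilon}$ with at most four breakpoints --- is exactly the part you do not establish. Everything before that point (enumerating the admissible $N$, enlarging the family of modular constructions, describing the dihedral action on parameters) only produces \emph{some} low-width solutions; the conjecture demands that a full fundamental set, i.e.\ a set meeting every symmetry class, be realised this way. Nothing in the paper, and nothing in your sketch, supplies a mechanism for controlling arbitrary solutions: the Width Theorem and the Composition Theorem are existence and closure results, not classification results, so they give no handle on a generic solution of a $Q$-irreducible board.

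Two further concrete problems with the plan. First, your claimed ``structural rigidity'' --- that $Q$-irreducibility of $N$ and $N-1$ forces any solution to look like multiplication by some $a\in\mathbb{Z}_{N+1}^{*}$ on large blocks up to bounded error --- is an unsupported assertion; $Q$-irreducibility only says no solution \emph{is} a composition of smaller boards, which is far weaker than saying every solution is close to a linear one. Known solution counts grow superexponentially, while the family $A_{a,\varepsilon}$ with $\leqslant 4$ segments has only polynomially many members, so the conjecture already implies a drastic (and unproven) collapse of the solution set modulo symmetry for these $N$; any proof must confront that counting tension, which your outline does not. Second, your description of the admissible pairs $(N-1,N)$ as ``essentially'' primes $2^k3^l-1$ plus Sophie Germain pairs is incomplete: by Statement 3.2 both numbers may independently be of any of the forms $p$, $2p$, $3p$, $2^k3^l$ (e.g.\ $N=15=3\cdot 5$ with $N-1=14=2\cdot 7$), so the case analysis you propose to uniformise is broader than stated. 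As it stands the proposal identifies where the difficulty lies but does not close the gap; the statement remains a conjecture.
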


This conjecture is checked by our program for $N$ up to $14$. Currently, the number of solutions is known only for values of $N \leqslant 27$.

For example, numbers $2018$ and $2019$ satisfy the condition of the conjecture (because $2017$ is a prime, $2018 = 2 \cdot 1009$ is a doubled prime, $2019 = 3 \cdot 673$ is a tripled prime).

If this conjecture is correct, then the $N$-Queens Problem and, consequently, the $N$-Queens Completion can be solved in polynomial time. Also then the number of solutions to the $N$-Queens Problem is bounded by the polynomial for such $Q$-irreducible $N$ that $N-1$ is also $Q$-irreducible. 

Based on the conjecture, new problem arise: are there infinitely many integers, satisfying the condition of the conjecture?

Let $N = 2^k 3^l$. Then $N-1$ is not divisible by $2$ and $3$. Thus $N-1$ must be prime and the question is the next one: are there infinitely many primes of form $2^k3^l-1$ (where $k,l \in \mathbb{N}_0$)? This is a generalization of Mersenne Primes Problem, because it is easy to see that for $l=0$ it is the Mersenne Primes Problem.

And the second problem is the next one: are there infinitely many primes $p$ such that $2p+1$ is prime too?

\section*{Acknowledgments}

The author would like to thank Dr. of Science (Habilitation) Stanislav Kublanovsky for problem statement and helpful discussion. Author is also grateful to Nikita Tepelin for assistance in writing programs.

\end{document}